\numberwithin{equation}{section}
\newcommand{\B}{\ensuremath{\mathbb{B}}}
\newcommand{\Z}{\ensuremath{\mathbb{Z}}}
\newcommand{\R}{\ensuremath{\mathbb{R}}}
\newcommand{\F}{\ensuremath{\mathbb{F}}}
\newcommand{\mh}{\ensuremath{\mathbb{H}}}
\newtheorem{teo}{Theorem}[section]
\newtheorem{df}[teo]{Definition}
\newtheorem{ej}[teo]{Example}
\newtheorem{obs}[teo]{Remark}
\def\today{{\number\day\space
 \ifcase\month\or
  January\or February\or March\or April\or May\or June\or
  July\or August\or September\or October\or November\or December\fi
 \space\number\year}}
\begin{document}

\title[Asymptotic traffic flow in a Hyperbolic Network]{Asymptotic traffic flow in a Hyperbolic Network: Non-uniform Traffic}

\author[Yuliy Baryshnikov and Gabriel H. Tucci]
{Yuliy Baryshnikov and Gabriel H. Tucci}

\address{Bell Laboratories Alcatel--Lucent,
Murray Hill, NJ 07974, USA}
\email{ymb@alcatel-lucent.com}
\email{gabriel.tucci@alcatel-lucent.com}

\begin{abstract}
In this work we study the asymptotic traffic flow in Gromov's hyperbolic graphs when the traffic decays exponentially with the distance. We prove that under general conditions, there exists a phase transition between local and global traffic. More specifically, assume that the traffic rate between two nodes $u$ and $v$ is given by $R(u,v)=\beta^{-d(u,v)}$ where $d(u,v)$ is the distance between the nodes. Then there exists a constant $\beta_c$ that depends on the geometry of the network such that if $1<\beta<\beta_c$ the traffic is global and there is a small set of highly congested nodes called the core. However, if $\beta>\beta_c$ then the traffic is essentially local and the core is empty which implies very small congestion.
\end{abstract}

\maketitle

\section{Introduction}
\vspace{0.3cm}
\noindent The structure of networks has been mainly the domain of a branch of discrete mathematics known as graph theory. Some basic ideas, used later by physicists, were proposed in 1959 by the Hungarian mathematician Paul Erd\"os and his collaborator R\'enyi.  Graph theory has witnessed many exciting developments and has provided answers to a series of practical questions such as: what is the maximum flow per unit time from source to sink in a network of pipes or how to color the regions of a map using the minimum number of colours so that neighbouring regions receive different colors among other important problems. In addition to the developments in mathematical graph theory, the study of networks has seen important achievements in some specialized contexts, as for instance in the social sciences. Most of the results of graph theory relevant to large complex networks, are related to the simplest models of random graphs.

\vspace{0.3cm}
\noindent Recent years however have witnessed a substantial new movement in network research, with the focus shifting away from the analysis of single small graphs and the properties of individual vertices or edges to considerations of ``large scale" statistical properties. The great majority of real world networks, including the World Wide Web, the Internet, basic cellular networks, social networks and many others have a more complex architecture than classical random graphs. Abstracting the network details away allows one to concentrate on the phenomena intrinsically connected with the underlying geometry, and discover connections between the metric properties and the network characteristics. Over the past few years, there has been growing evidence that many communication networks have characteristics of negatively curved spaces \cite{H0, H1, H2, H3, H4}. From the large scale point of view, it has been experimentally observed that, on the Internet and other networks, traffic seems to concentrate quite heavily on some very small subsets.

\vspace{0.3cm}
\noindent We believe that many of the complex real world networks have characteristics of negatively curved or more generally Gromov's hyperbolic spaces. In Figure 1, we observe a picture of the World Wide Web and the Internet network that suggests a hyperbolic structure. In this project we continue the analysis and approach done in \cite{Bar-Tucci}. We study the traffic behaviour for large Gromov's hyperbolic spaces when the traffic rate decays exponentially with the metric distance between the nodes. We prove that under general conditions there exists a phase transition between local and global traffic. More specifically, assume that the traffic rate between two nodes $u$ and $v$ in our network is given by $R(u,v)=\beta^{-d(u,v)}$ where $d(u,v)$ is the distance between the nodes. We show that there exists a constant $\beta_c$ such that if $1<\beta<\beta_c$ the traffic is global and there is a small set of highly congested nodes called the core. However, if $\beta>\beta_c$ then the traffic is essentially local and the core is empty. This implies in particular, that polynomially decaying rate functions do not affect the locality of the traffic, and the existence or non--existence of a core. The dichotomy of global versus local traffic is more important than ever. A recent study showed that the consumer broadband usage and global IP network traffic continues to climb at an overwhelming pace due to new forms and expanded usage of interactive media, and the explosion of video content across multiple devices. The study projects that global IP traffic will increase fivefold by 2013. The major growth driver is video and it is expected that by 2013 the sum of all forms of video (TV, VoD, Internet video and P2P) will exceed 90\% of the total consumer IP traffic. This, if not handled appropriate, will generate huge congestions in our networks. Our results in particular imply that if we foment and incentive local traffic instead of global traffic this problem can be minimized. 

 \begin{figure}[!Ht]\label{internet}
  \begin{center}
    \includegraphics[width=1.8in]{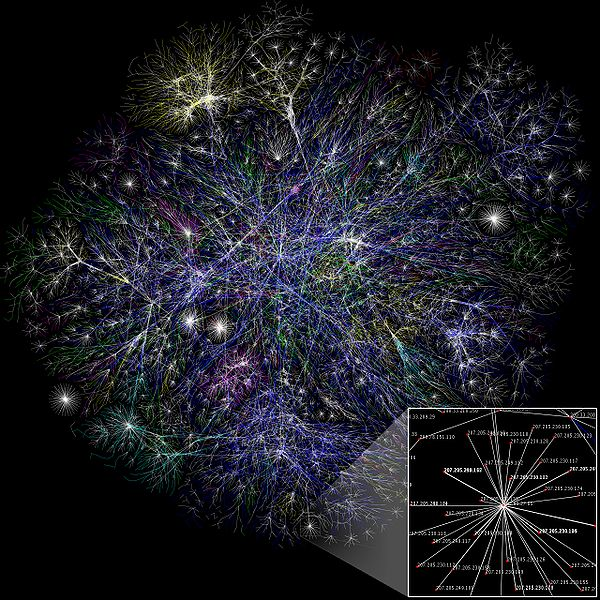}
    \includegraphics[width=2.4in]{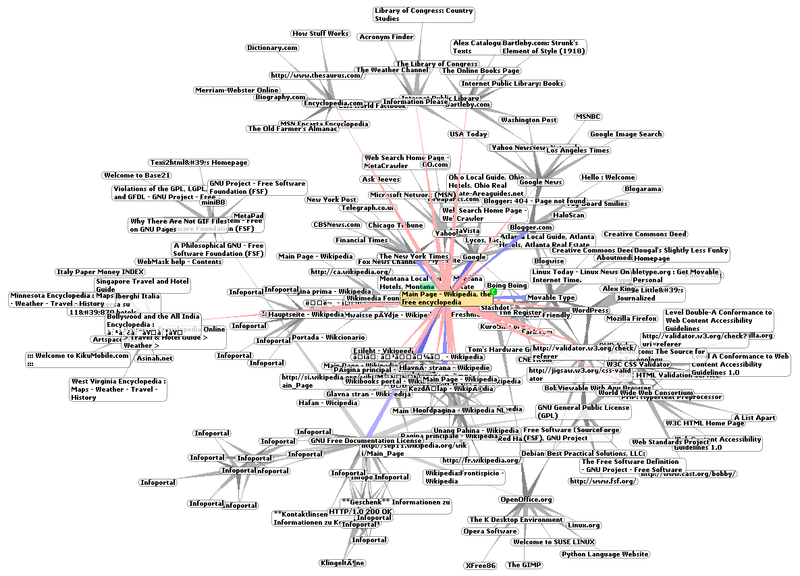}
    \caption{On the right we see a visualization of the various routes of the Internet. On the left we see a map of the World Wide Web.}
  \end{center}
\end{figure}

\vspace{0.3cm}
\noindent In Section \ref{prelim}, we review the concept of Gromov's hyperbolic space and present some of the important examples and properties. We also recall the construction of the boundary of an hyperbolic space and its visual metric. In Section \ref{tree}, we study the traffic phenomena in a general locally finite tree when the rate decays exponentially with the distance. Finally, in Section \ref{hyper} we study the asymptotic traffic behaviour in general Gromov's hyperbolic graphs when the traffic decays exponentially with the distance and prove our main results.

\vspace{0.3cm}
\noindent
{\it Acknowledgement:} We would like to thank Iraj Saniiee for many helpful discussions and comments. This work was supported by AFOSR Grant No. FA9550-08-1-0064.

\section{Preliminaries}\label{prelim}
\noindent In this Section we review the notion of Gromov's $\delta$--hyperbolic space as well as some of the basic properties, theorems and constructions.

\subsection{$\delta$--Hyperbolic Spaces}
\noindent There are many equivalent definitions of Gromov's hyperbolicity but the one we take as our definition is the property that triangles are {\it slim}.

\begin{df}
Let $\delta>0$. A geodesic triangle in a metric space $X$ is said to be $\delta$--slim if each of its sides is contained in the $\delta$--neighbourhood of the union of the other two sides. A geodesic space $X$ is said to be $\delta$--hyperbolic if every triangle in $X$ is $\delta$--slim. 
\end{df}

\noindent It is easy to see that any tree is $0$-hyperbolic. Other examples of hyperbolic spaces include, any finite graph, the fundamental group of a surface of genus greater or equal than 2, the classical hyperbolic space, and any regular tessellation of the hyperbolic space (i.e. infinite planar graphs with uniform degree $q$ and $p$--gons as faces with $(p-2)(q-2)>4$).

\begin{df} (Hyperbolic Group) A finitely generated group $\Gamma$ is said to be word--hyperbolic if there is a finite generating set $S$ such that the Cayley graph $C(\Gamma,S)$ is $\delta$--hyperbolic with respect to the word metric for some $\delta$.
\end{df}

\noindent It turns out that if $\Gamma$ is a word hyperbolic group then for any finite generating set $S$ of $\Gamma$ the corresponding Cayley graph is hyperbolic, although the hyperbolicity constant depends on the choice of $S$.

\begin{df}(Gromov's Product) Let $(X,d)$ be a metric space. For $x,y$ and $z\in X$ we define 
$$(y,z)_{x}:=\frac{1}{2}(d(x,y)+d(x,z)-d(y,z)).$$

\noindent We call $(y,z)_{x}$ the Gromov's product of $y$ and $z$ with respect to $x$. 
\end{df}
\noindent In hyperbolic metric spaces the Gromov's product measures how long two geodesics travel close together. Namely if $x,y$ and $z$ are three points in a $\delta$ hyperbolic metric space $(X,d)$, then the initial segments of length $(y,z)_{x}$ of any two geodesics $[x,y]$ and $[x,z]$ are $2\delta$ Hausdorff close. Moreover, in the case of Gromov's product $(y,z)_{x}$ approximates within $2\delta$ the distance from $x$ to a geodesic $[y,z]$.

\subsection{Boundary of Hyperbolic Spaces}

\vspace{0.3cm}
\noindent We say that two geodesic rays $\gamma_{1}:[0,\infty)\to X$ and $\gamma_{2}:[0,\infty)\to X$ are equivalent and write $\gamma_{1}\sim\gamma_{2}$ if there is $K>0$ such that for any $t\geq 0$
$$
d(\gamma_{1}(t),\gamma_{2}(t))\leq K.
$$
It is easy to see that $\sim$ is indeed an equivalence relation on the set of geodesic rays. Moreover, two geodesic rays $\gamma_{1},\gamma_{2}$ are equivalent if and only if their images have finite Hausdorff distance. The Hausdorff distance is defined as the infimum of all the numbers $H$ such that the images of $\gamma_{1}$ is contained in the $H$--neighbourhood of the image of $\gamma_{2}$ and vice versa.

\vspace{0.3cm}
\noindent The boundary is usually defined as the set of equivalence classes of geodesic rays starting at the base--point, equipped with the compact--open topology. That is to say, two rays are ``close at infinity'' if they stay close for a long time. We make this notion precise.

\begin{df}(Geodesic Boundary)
Let $(X,d)$ be a $\delta$--hyperbolic metric space and let $x_0\in X$ be a base--point. We define the relative geodesic boundary of $X$ with respect to the base--point $x_0$ as 

\begin{equation}
 \partial {X}:=\{[\gamma]\,\,:\,\gamma:[0,\infty)\to X \,\,\text{is a geodesic ray with} \,\,\gamma(0)=x_0\}.
\end{equation}
\end{df}

\vspace{0.3cm}
\noindent It turns out that the boundary has a natural metric.

\begin{df}
Let $(X,d)$ be a $\delta$--hyperbolic metric space. Let $a>1$ and let $x_{0}\in X$ be a base--point. We say that a metric $d_{a}$ on $\partial X$ is a visual metric with respect to the base point $x_{0}$ and the visual parameter $a$ if there is a constant $C>0$ such that the following holds:

\begin{enumerate}
\item The metric $d_{a}$ induces the canonical boundary topology on $\partial X$.
\item For any two distinct points $p,q\in\partial X$, for any bi-infinite geodesic $\gamma$ connecting $p,q$ in $X$ and any $y\in\gamma$ with  $d(x_{0},\gamma)=d(x_{0},y)$ we have:
$$\frac{1}{C}a^{-d(x_{0},y)}\leq d_{a}(p,q)\leq Ca^{-d(x_{0},y)}.$$
\end{enumerate}
\end{df}

\begin{teo}(\cite{Coor}, \cite{Harper})\label{visual_teo}
Let $(X,d)$ be a $\delta$--hyperbolic metric space. Then:
\begin{enumerate}
\item There is $a_{0}>1$ such that for any base point $x_{0}\in X$ and any $a\in (1,a_{0})$ the boundary $\partial X$ admits a visual metric $d_{a}$ with respect to $x_{0}$.
\item Suppose $d'$ and $d''$ are visual metrics on $\partial X$ with respect to the same visual parameter $a$ and the base points $x_{0}'$ and $x_{0}''$ accordingly. Then $d'$ and $d''$ are Lipschitz equivalent, that is there is $L>0$ such that
$$d'(p,q)/L\leq d''(p,q)\leq Ld'(p,q)\hspace{0.5cm}\text{for any}\,\,p,q\in\partial X.$$ 
\end{enumerate}
\end{teo}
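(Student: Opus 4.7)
The plan is to build the visual metric in two stages, starting from the Gromov product and its $\delta$-hyperbolic ultra-metric-like inequality, and then to compare different base points via the standard change-of-basepoint estimate.

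First, I would extend the Gromov product to the boundary: for $p, q \in \partial X$, set $(p,q)_{x_0} := \sup \liminf_{n,m \to \infty}(y_n, z_m)_{x_0}$, the supremum taken over sequences $y_n \to p$ and $z_m \to q$ (equivalently, $(y_n, y_m)_{x_0} \to \infty$). The standard four-point hyperbolicity inequality $(x,z)_w \geq \min\{(x,y)_w,(y,z)_w\} - \delta$ on $X$ passes to the boundary, giving
$$(p,r)_{x_0} \geq \min\{(p,q)_{x_0},(q,r)_{x_0}\} - 2\delta.$$
Setting $\rho_a(p,q) := a^{-(p,q)_{x_0}}$ (with $\rho_a(p,p) = 0$) then yields a symmetric function satisfying the quasi-ultrametric inequality $\rho_a(p,r) \leq a^{2\delta}\max\{\rho_a(p,q),\rho_a(q,r)\}$, with constant $K = a^{2\delta}$.

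Second, I would invoke a Frink-style chain construction to turn $\rho_a$ into an honest metric. Set
$$d_a(p,q) := \inf\sum_{i=0}^{n-1}\rho_a(p_i,p_{i+1}),$$
the infimum over finite chains $p = p_0,p_1,\ldots,p_n = q$. By construction $d_a \leq \rho_a$ and $d_a$ satisfies the triangle inequality. A standard induction on chain length, which goes through precisely when $K$ is small enough (say $a^{2\delta} \leq \sqrt{2}$, which determines $a_0$), gives the reverse comparison $d_a(p,q) \geq C^{-1}\rho_a(p,q)$ for some $C = C(\delta,a)$. Because in a $\delta$-hyperbolic space the Gromov product $(p,q)_{x_0}$ agrees up to an additive constant $O(\delta)$ with $d(x_0,\gamma)$ for any bi-infinite geodesic $\gamma$ joining $p$ and $q$, this comparison is exactly the two-sided visual-metric inequality of condition (2). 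Condition (1), that $d_a$ induces the canonical boundary topology, follows because small $d_a$-balls correspond via $\rho_a$ to large lower bounds on $(p,q)_{x_0}$, which is by definition what it means for two rays to be close in the compact-open topology on $\partial X$.

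For part (2), the key input is the change-of-basepoint inequality $|(p,q)_{x_0'} - (p,q)_{x_0''}| \leq d(x_0',x_0'') + O(\delta)$, which holds for points in $X$ and passes to $\partial X$ exactly as above. Hence $\rho_a'$ and $\rho_a''$ differ multiplicatively by a bounded factor, and the chain infimum transports this bound to $d'$ and $d''$, giving Lipschitz equivalence with constant $L$ depending only on $d(x_0',x_0'')$, $\delta$, and $a$. The main obstacle I expect is in the first step: the ultrametric-type inequality on $X$ is only approximate (up to $\delta$), and one has to verify that taking $\liminf$ over sequences converging to boundary points does not degrade the $2\delta$ constant, since Frink's lemma is quantitatively sharp and any loss would enlarge the quasi-ultrametric constant $K$ beyond what the chain argument can handle without shrinking the admissible range of $a$.
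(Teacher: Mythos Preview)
The paper does not prove this theorem at all: it is quoted as a known result from the cited references \cite{Coor} and \cite{Harper}, so there is no ``paper's own proof'' to compare against. Your sketch is essentially the classical argument found in those very sources (extend the Gromov product to $\partial X$, use the $\delta$-hyperbolic inequality to get a quasi-ultrametric $\rho_a = a^{-(\cdot,\cdot)_{x_0}}$, apply a Frink-type chain construction under a smallness condition on $a^{2\delta}$, and use the change-of-basepoint estimate for part (2)), so in that sense your approach matches the literature the paper defers to. One minor point: for part (2) as stated, $d'$ and $d''$ are arbitrary visual metrics with parameter $a$, not necessarily the specific chain-infimum metrics you construct, so the comparison should go through the defining two-sided inequality $C^{-1}a^{-d(x_0,\gamma)} \leq d_a(p,q) \leq C\,a^{-d(x_0,\gamma)}$ directly rather than by ``transporting'' the bound through the chain infimum.
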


\vspace{0.2cm}
\noindent The metric on the boundary is particularly easy to understand when $(X,d)$ is a tree. In this case $\partial X$ is the space of ends of $X$. The parameter $a_{0}$ from the above proposition is $a_{0}=\infty$ here and for some base point $x_{0}\in X$ and $a>1$ the visual metric $d_{a}$ can be given by an explicit formula:
$$
d_{a}(p,q)=a^{-d(x_{0},y)}
$$
for any $p,q\in\partial X$ where $[x_{0},y]=[x_{0},p)\cap [x_{0},q)$ so that $y$ is the bifurcation point for the geodesic rays $[x_{0},p)$ and $[x_{0},q)$.

\vspace{0.2cm}
\noindent Here are some more examples of boundaries of hyperbolic spaces (for more on this topic see \cite{Coor, Harper, Gromov}.)

\begin{ej}
\begin{enumerate}
\item If $X$ is a finite graph then $\partial X =\emptyset$.
\item If $X =\Z$, the infinite cyclic group, then $\partial X $ is homeomorphic to the set $\{0,1\}$ with the discrete topology.
\item If $n\geq 2$ and $X =\F_{n}$, the free group of rank $n$, then $\partial X $ is homeomorphic to the space of ends of a regular $2n$--valent tree, that is to a Cantor set.
\item Let $S_{g}$ be a closed oriented surface of genus $g\geq 2$ and let $X =\pi_{1}(S_{g})$. Then $X $ acts geometrically on the hyperbolic plane $\mh^{2}$ and therefore the boundary is homeomorphic to the circle $S^{1}$. 
\item Let $M$ be a closed $n$--dimensional Riemannian manifold of constant negative sectional curvature and let $X =\pi_{1}(M)$. Then $X$ is word hyperbolic and $\partial X $ is homeomorphic to the sphere $S^{n-1}$.
\item The boundary of the classical $n$ dimensional hyperbolic space $\mathbb{H}^n$ is $S^{n-1}$.
\end{enumerate}
\end{ej}

\subsubsection{Hausdorff dimension and Growth Function}

\noindent Given an hyperbolic graph $X$, it is natural to ask about the Hausdorff dimension of its boundary set. Let $(X,d)$ be a complete metric space. One defines the $\alpha$--Hausdorff measure of a set $Z\subset X$ as 
$$
m_{H}(Z,\alpha):=\liminf_{\epsilon\to 0}{\sum_{U\in\mathcal{G}_{\epsilon}}{(\mathrm{diam}(U))^{\alpha}}},
$$
the infimum being taken over all the covers $\mathcal{G}_{\epsilon}$ of $Z$ by open sets of diameter at most $\epsilon$. The usual Hausdorff dimension of $Z$ is taken
$$
\dim_{H}(Z)=\inf\{\alpha\,:\,m_{H}(Z,\alpha)=0\}=\sup\{\alpha\,:\,m_{H}(Z,\alpha)=+\infty\}.
$$
When $m_{H}(X,\dim_{H}(X))$ is finite and non zero, the function $Z\to m_{H}(Z,\dim_{H}(X))$ is after normalization a probability measure on $X$, called the Hausdorff measure. The critical exponent of base $a$ of an infinite graph is defined as
\begin{equation}\label{growth}
 e_{a}(X):=\limsup_{R\to\infty}{\frac{\log_{a}(|\{x \in X\,:\,d(x_0,x)\leq R\}|)}{R}}.
\end{equation}
It is known that the Hausdorff dimension of the boundary $\partial X$ with respect to the visual metric $d_a$ is equal to $e_{a}(X)$ (see for instance \cite{Coor, Coorna, Bla}).



\section{Asymptotic Traffic Flow in a Tree}\label{tree}

\noindent In this Section we study the asymptotic traffic behaviour in a locally finite tree when the traffic decays exponentially with the distance. More specifically, let $\{k_{l}\}_{l=0}^{\infty}$ be a sequence of positive integers with $k_{0}=1$. For each sequence like this we consider the infinite tree $T$ with the property that each element at depth $l$ has $k_{l+1}$ descendants. In other words, the root has $k_{1}$ descendants, each node in the first generation has $k_{2}$ descendants and so on. The root is considered the 0 generation. Let us denote by $T_{n}$ the finite tree generated by the first $n$ generations of $T$. Let $N=N(n)$ be the number of elements in $T_{n}$. It is clear that
$$
N(n)=1+k_{1}+k_{1}k_{2}+\ldots+k_{1}k_{2}\ldots k_{n}=\sum_{l=0}^{n}{\prod_{i=0}^{l}{k_{i}}}.
$$
For each fixed $n\geq 1$, assume that there is traffic between $\partial T_{n}$, the leaves of the truncated tree $T_{n}$. We also assume also that the traffic rate between $x_i$ and $x_j$ in $\partial T_{n}$ depends only on the distance between these two leaves and decays exponentially. More specifically,
$$
R(i,j)=\beta^{-d(x_i,x_j)}\hspace{0.5cm} \text{where}\,\,\, \beta>1.
$$

\vspace{0.3cm}
\noindent Denote by $x_{0}$ the root of the tree. For simplicity let us first assume that the tree $T$ is $(k+1)$--regular which is equivalent to assume that $k_{l}=k$ for all $l\geq 1$. It is an easy observation to see that the number of elements of $\partial T_{n}$ is equal to $N(n):=|\partial T_{n}|=(k+1)k^{n-1}$. Let us denote these points as $x_{1},\ldots,x_{N}$. Let $n_{p}=|\{x_{i}\,\,:\,\,d(x_{1},x_{i})=p\}|$. Then
\begin{equation}
n_{p} = \left\{
\begin{array}{ll}
(k-1)k^{r-1} & \text{if }  p=2r \,\,\,\text{for}\,\,\, 0\leq r\leq n\\
0 & \text{otherwise }  \\
\end{array} \right.
\end{equation}
The total traffic between the points $x_{1},x_{2},\ldots,x_{N}$ is 
\begin{eqnarray*}
T(n) & = & N\cdot\Bigg(\sum_{p=0}^{\infty}{n_{p}\,\beta^{-p}}\Bigg)=N\cdot\Big(1+(k-1)\cdot\sum_{i=0}^{n-1}{k^{i}\,\beta^{-2(i+1)}}\Big) \\
& = & N\cdot\Bigg(1+\frac{k-1}{\beta^2}\cdot\frac{(k/\beta^{2})^{n}-1}{(k/\beta^{2})-1}\Bigg)
\end{eqnarray*}
The total traffic passing through the root of the tree is $N(k-1)k^{n-1}\beta^{-2n}$. Hence the proportion of the traffic passing through the root of the tree is equal to 
$$
P(n)=\frac{(k-1)k^{n-1}\beta^{-2n}}{1+\beta^{-2}(k-1)\frac{(k\beta^{-2})^{n}-1}{(k\beta^{-2})-1}}.
$$
Here we can distinguish two cases. The first case is $\beta\geq\sqrt{k}$. In this case 
$$
\lim_{n \to\infty}{P(n)}=0.
$$ 
The other case is $1<\beta<\sqrt{k}$ in which 
$$
\lim_{n\to\infty}{P(n)}=1-\frac{\beta^{2}}{k}.
$$
This shows in particular that if the traffic decay is sub--exponential then the asymptotic proportion of the traffic through the root is $1-\frac{1}{k}$. A similar analysis and conclusion can be carried out for the general tree $T$ as long as there is an upper bound on the coefficients $k_{l}$. We deduce a more general theorem in the next Section which includes this result as a particular case.

\section{Asymptotic Traffic Flow in a $\delta$--Hyperbolic Graph}\label{hyper}

\noindent In this Section, we study the asymptotic traffic flow in a $\delta$--hyperbolic graph.  Throughout this Section we assume that $X$ is an infinite, locally finite (every node has finite degree), simple (no loops or multiple edges) graph. Assume that there exists $\delta>0$ such that $X$ is Gromov's $\delta$--hyperbolic. Let $x_{0}\in X$ be a fixed base point and let 
$$ 
\{x_{0}\}=X_{0}\subset X_{1}\subset X_{2}\subset \ldots\subset X_{n}\subset\ldots\subset X
$$
be a sequence of finite subsets with the properties that: 
\begin{itemize}
\item $\cup_{n\geq 1}{X_{n}}=X$,
\item for every $x\in X_{n}$ and for every geodesic segment $[0,x]$ connecting $0$ and $x$  then every intermediate point belongs to $X_{n}$. 
\end{itemize}

\vspace{0.2cm}
\noindent Denote as usual by $\partial X_{n}$ the boundary set of $X_{n}$ in $X$ and recall that a point $y$ belongs to $\partial X_{n}$ if $y\in X_{n}$ and there exists $z\in X\setminus X_{n}$ such that $z\sim y$ ($z$ and $y$ are adjacent).

\begin{figure}[!Ht]
  \begin{center}
    \includegraphics[width=5cm]{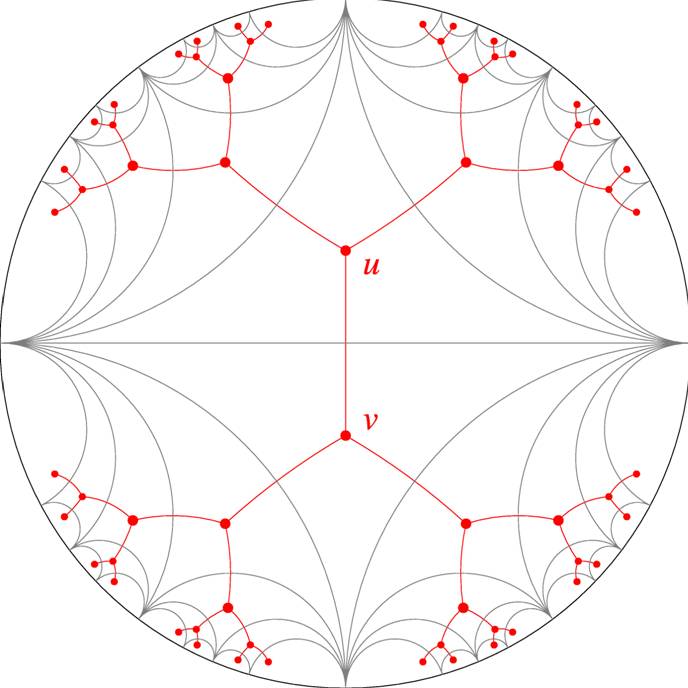}
    \caption{Regular tree embedded in $\B^{2}$.}
  \end{center}
\end{figure}

\vspace{0.3cm}
\noindent We assume that for each fixed $n$ the traffic flow goes from nodes in $\partial X_{n}$ to nodes in $\partial X_{n}$. We also assume that there is a non--increasing, and continuous function $f:[0,\infty)\to [0,\infty)$ such that the traffic rate between $x$ and $y$ in $\partial X_{n}$ is equal to 
\begin{equation}
R(x,y)=f(d(x,y))
\end{equation} 
where $d(x,y)$ is the distance between these two points. The traffic flow goes through the geodesic connecting $x$ and $y$, and if there are more than one geodesic connecting these points we assume that the load is divided equally between the different paths. We pay special attention to the case where $f(t)=\beta^{-t}$ for $\beta>1$.

\vspace{0.3cm}
\noindent Of central importance in this work is the case where the sets $\{X_{n}\}_{n}$ are balls. More precisely, assume that 
\begin{equation}\label{balls}
X_{n}:=\{x\in X\,\,:\,\,d(x_{0},x)\leq n\}.
\end{equation}
In this case it is clear that $\partial X_{n}=\{x\in X\,\,:\,\,d(x_{0},x)=n\}$. Recall from Section \ref{prelim} that for any $x$ and $y$ in $X$,
\begin{equation}\label{xy}
(x,y)_{x_{0}}=\frac{1}{2}(d(x_{0},x)+d(x_{0},y)-d(x,y)),
\end{equation} 
and 
$$
h(x,y)=d(x_{0},\gamma_{x,y}),
$$ 
where $\gamma_{x,y}$ is the geodesic connecting $x$ and $y$ (if there is more than one geodesic connecting $x$ and $y$ then we consider the minimum).

\vspace{0.3cm}
\noindent It is not difficult to see that (see \cite{Gromov})
$$
h(x,y)-4\delta\leq (x,y)_{x_{0}}\leq h(x,y).
$$
\noindent In particular, using equation (\ref{xy}) we see that for every pair of points $x$ and $y$ in $\partial X_{n}$ 
\begin{equation}\label{eqq}
n-\frac{d(x,y)}{2} \leq h(x,y)\leq n-\frac{d(x,y)}{2}+4\delta.
\end{equation}

\vspace{0.3cm}
\noindent By theorem \ref{visual_teo} we know that exists $a_{0}>1$ such that for all $a\in (1,a_{0})$ the boundary $\partial X$ admits a visual metric $d_{a}$ with base point $x_{0}$. Hence, there exists $C>0$ such that:

\begin{enumerate}
 \item The metric $d_{a}$ induces the canonical boundary topology on $\partial X$.
 \item For all $p\neq q\in\partial X$  
\begin{equation}\label{da}
\frac{1}{C}\,a^{-h(p,q)}\leq d_{a}(p,q)\leq C\,a^{-h(p,q)}.
\end{equation}
\end{enumerate}

\noindent Let $r>0$ be fixed, and let $p$ and $q\in\partial X$ with $h(p,q)\leq r$ then $\frac{1}{C}\,a^{-r}\leq \frac{1}{C}\,a^{-h(p,q)}\leq d_{a}(p,q)$. \noindent Therefore,
\begin{equation}
d_{a}(p,q)\geq \frac{1}{C}\,a^{-r}.
\end{equation}
On the other hand, if $d_{a}(p,q)\geq C\,a^{-r}$ then $h(p,q)\leq r$. Note that $C$ is a positive fixed constant that only depends on $a$.

\vspace{0.3cm}
\noindent Let $p$ and $q\in\partial X$ and $x_{n}$ and $y_{n}\in\partial X_{n}$ such that $x_{n}\to p$ and $y_{n}\to q$ as $n$ goes to infinity. Then by equation (\ref{eqq}) 
$$
n-\frac{d(x_n,y_n)}{2}\leq h(x_n,y_n)\leq n-\frac{d(x_n,y_n)}{2}+4\delta.
$$ 
Since $\lim_{n\to\infty}{h(x_{n},y_{n})}=h(p,q)$ we conclude that for $n$ sufficiently large 
\begin{equation}\label{eq_h}
2(n-h(p,q))\leq d(x_{n},y_{n})\leq 2(n-h(p,q))+4\delta.
\end{equation}

\noindent Let $\mu_{n}$ be the uniform measure in $\partial X_{n}$ defined as
\begin{equation}\label{visual}
\mu_{n}= \sum_{x\in\partial X_{n}}{\delta_{x}}.
\end{equation}
This measure defines a visual Borel probability measure $\mu_{n}^{v}$ in the boundary $\partial X$. The way this measure is defined is described below. 

\begin{df}
Let $A\subseteq\partial X$ be a Borel subset. For each $a\in A$ consider the set of sequences $\{x_{k}\}_{k=0}^{\infty}$ such that:
$x_{0}=0$, the sequence is a geodesic ray in $X$ that converges to $a$. These sequences correspond to rays connecting $0$ with $a$. Let $C_{A}$ be the set of points in $X$ that belong to some ray connecting $0$ with $a$ for some $a\in A$. We define 
\begin{equation}
\mu_{n}^{v}(A):=\frac{\mu_{n}{(X_{n}\cap C_{A})}}{\mu_n(X_n)}.
\end{equation}
\end{df}

\noindent It can be shown that these visual measures converge weakly to a conformal measure $\nu$ in $\partial X$ (see \cite{Coorna} for more details on this and the construction of the conformal measures)
\begin{equation}\label{weak}
\mu_{n}^{v}\to\nu\hspace{0.5cm}\text{weakly}.
\end{equation}
Moreover, see \cite{Coorna} the Hausdorff dimension of this measure with respect to the visual metric $d_a$ is equal to $e_{a}(X)$ as in equation (\ref{growth}). Moreover, it was proved in Proposition 7.4 of \cite{Coorna} that there exists a constant $K>1$ such that
\begin{equation}\label{hauss}
K^{-1}r^{D}\leq \nu(B(x,r))\leq Kr^{D}
\end{equation}
for every point $x\in \partial X$ and every $r\geq 0$ with respect to the metric $d_a$.

\vspace{0.3cm}
\noindent We assume, as we mentioned before, that for each fixed $n$, the traffic flow between two points $x$ and $y$ in $\partial X_{n}$ is equal to is $R(x,y)=f(d(x,y))$ for some fixed function $f$. The total traffic passing through the network $X_{n}$ is equal to 
\begin{equation}\label{tt}
 T(n)=\int_{\partial X_{n}\times\partial X_{n}}{R(x,y)\,d\mu_{n}(x)\,d\mu_{n}(y)}.
\end{equation}
Let $r\geq 0$, and denote by $T_{r}(n)$ the total traffic passing through $B(x_{0},r)$. Then 
\begin{equation}\label{tr}
 T_{r}(n)=\int_{\partial X_{n}}  {\Bigg(\int_{E_{x}^{r}}{R(x,y)\,d\mu_{n}(y)}\Bigg)\,d\mu_{n}(x)}
\end{equation}
where $E_{x}^{r}=\{y\in\partial X_{n}\,\,:\,\,h(x,y)\leq r\}$ and $x\in\partial X_{n}$.

\subsection{Exponential Decay}
In what follows we assume that the traffic rate decays exponentially with the distance, i.e. there exist $\beta>1$ such that
$$
R(x,y)=\beta^{-d(x,y)}.
$$
Now we are ready to state our main theorem.

\begin{teo}\label{main}
Let $X$ be an infinite $\delta$--hyperbolic graph and let $\{X_{n}\}_{n=0}^{\infty}$ be as in (\ref{balls}). Then there exist a constant $\beta_c$ such that if $1<\beta<\beta_c$ then for every $\epsilon>0$ there exist $r_{0}>0$ such that for all $r\geq r_{0}$ 
\begin{equation}\label{small_beta}
\lim_{n\to\infty}{\frac{T_{r}(n)}{T(n)}}\geq 1-\epsilon.
\end{equation}
Moreover, if $\beta > \beta_c$ then for every $r>0$ 
\begin{equation}\label{big_beta}
\lim_{n\to\infty}{\frac{T_{r}(n)}{T(n)}}=0.
\end{equation}
Moreover, $\beta_c=e^{e(X)/2}$ where $e(X)$ is defined as $e_a(X)$ with respect to the natural logarithm.
\end{teo}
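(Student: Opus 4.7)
The approach is to reduce both $T(n)$ and $T_r(n)$ to Riesz-type energy integrals on the boundary $\partial X$, and then to locate the critical exponent by means of Coornaert's Ahlfors estimate (\ref{hauss}) for the conformal measure $\nu$. Fix an admissible visual parameter $a\in(1,a_{0})$. For $x,y\in \partial X_n$ identity (\ref{eqq}) yields $d(x,y)=2n-2h(x,y)+O(\delta)$; combining with (\ref{da}) one gets
\[
\beta^{-d(x,y)}\asymp \beta^{-2n}\,d_a(\hat x,\hat y)^{-s},\qquad s:=2\log_a\beta,
\]
where $\hat x\in\partial X$ denotes any forward extension of the geodesic $[x_0,x]$. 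Pushing $\mu_n$ forward through these extensions produces, up to the normalization already present in (\ref{visual}), the visual measures $\mu_n^v$, and with $N_n:=|\partial X_n|$ one obtains
\[
T(n)\;\asymp\;N_n^{2}\,\beta^{-2n}\int_{\partial X\times\partial X} d_a(p,q)^{-s}\,d\mu_n^v(p)\,d\mu_n^v(q),
\]
together with the analogous formula for $T_r(n)$, where the integration is restricted, via the converse half of (\ref{da}), to $\{(p,q):d_a(p,q)\ge C^{-1}a^{-r}\}$. By (\ref{hauss}), $\nu$ is Ahlfors $D$-regular with $D=e_a(X)$, so a layer-cake computation gives $\int\!\!\int d_a^{-s}\,d\nu\,d\nu<\infty$ if and only if $s<D$; rewriting with natural logarithms, this threshold is precisely $\beta<e^{e(X)/2}=\beta_c$.

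\emph{Subcritical case} ($\beta<\beta_c$, $s<D$). Provided a uniform Ahlfors upper bound $\mu_n^v(B(p,\rho))\le K\rho^D$ holds for $\rho\ge a^{-n}$, obtained by transferring Coornaert's argument for $\nu$ down to the finite sphere $\partial X_n$, the kernel $d_a^{-s}$ is $\mu_n^v\otimes\mu_n^v$-integrable with bound uniform in $n$. Combined with the weak convergence $\mu_n^v\to\nu$ of (\ref{weak}) and dominated convergence, this yields
\[
\int\!\!\int d_a^{-s}\,d\mu_n^v\,d\mu_n^v\longrightarrow\int\!\!\int d_a^{-s}\,d\nu\,d\nu<\infty,
\]
and the analogous convergence on any fixed truncated region. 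Therefore
\[
\lim_{n\to\infty}\left(1-\frac{T_r(n)}{T(n)}\right)=\frac{\int\!\!\int_{d_a<C^{-1}a^{-r}} d_a^{-s}\,d\nu\,d\nu}{\int\!\!\int d_a^{-s}\,d\nu\,d\nu},
\]
which tends to $0$ as $r\to\infty$ by dominated convergence, proving (\ref{small_beta}).

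\emph{Supercritical case} ($\beta>\beta_c$, $s>D$). Now the double integral is dominated by the smallest accessible scale $a^{-n}$; the two-sided Ahlfors bound for $\mu_n^v$ on $[a^{-n},1]$ together with a layer-cake integration give
\[
\int\!\!\int d_a^{-s}\,d\mu_n^v\,d\mu_n^v\asymp a^{n(s-D)}=\beta^{2n}/a^{nD},
\]
so $T(n)\asymp N_n$, using $N_n\asymp a^{nD}$ and $a^{s}=\beta^{2}$. On the other hand, the truncated integral defining $T_r(n)$ is supported on $\{d_a\ge C^{-1}a^{-r}\}$ and hence bounded uniformly in $n$ by $C'(a^{-r})^{-s}=C'\beta^{2r}$. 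Consequently
\[
\frac{T_r(n)}{T(n)}\lesssim \beta^{2r}\cdot\frac{a^{nD}}{\beta^{2n}}=\beta^{2r}\left(\frac{a^{D}}{\beta^{2}}\right)^n\longrightarrow 0
\]
since $a^D=e^{e(X)}<\beta^2$ whenever $\beta>\beta_c$, yielding (\ref{big_beta}).

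The principal obstacle lies in the subcritical case: the kernel $d_a^{-s}$ has a diagonal singularity and cannot be integrated against the weakly convergent $\mu_n^v$ without additional uniform regularity. One must therefore extract a uniform Ahlfors estimate for the approximants $\mu_n^v$ at all scales above the resolution $a^{-n}$, by revisiting Coornaert's construction at the finite level rather than only on the limit $\nu$, and then truncate the kernel to isolate the diagonal. The supercritical case additionally needs the matching Ahlfors lower bound at scale $a^{-n}$ to produce the sharp asymptotic $T(n)\asymp N_n$.
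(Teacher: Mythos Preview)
Your proposal follows essentially the same route as the paper: translate $\beta^{-d(x,y)}$ into $\beta^{2h(x,y)}$ (equivalently $d_a^{-s}$) via (\ref{eqq}) and (\ref{da}), push the problem to the boundary, and decide convergence of the resulting energy by the Ahlfors regularity (\ref{hauss}) of $\nu$, which singles out the threshold $s=D$, i.e.\ $\beta_c=e^{e(X)/2}$. The paper's proof is more streamlined in presentation: it asserts the $n\to\infty$ limit of $T_r(n)/T(n)$ directly from (\ref{eq_h}) and the weak convergence (\ref{weak}), writes the limiting ratio as
\[
\frac{\int_{\partial X}\!\int_{E_x^r}\beta^{2h(x,y)}\,d\nu\,d\nu}{\int_{\partial X\times\partial X}\beta^{2h(x,y)}\,d\nu\,d\nu},
\]
exploits that $h$ is integer-valued to reduce the denominator to the series $\sum_k \beta^{2k}\nu(\{h(x,\cdot)=k\})$, and then reads off convergence/divergence from the upper Ahlfors bound. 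In particular the paper never needs the uniform Ahlfors estimate for the approximants $\mu_n^v$ that you flag as the principal obstacle: the singular kernel is integrated only against the limit measure $\nu$, for which (\ref{hauss}) is already available. Your version trades that shortcut for a more explicit justification of the $n\to\infty$ passage (and a sharper quantitative estimate $T(n)\asymp N_n$ in the supercritical regime), which is legitimate but not what the paper does.
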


\begin{proof}
Using Equations (\ref{eq_h}), (\ref{weak}), (\ref{tt}) and (\ref{tr}) we see that  
$$
\lim_{n\to\infty}{\,\,\frac{T_{r}(n)}{T(n)}}=\frac{\int_{\partial X}{\Big(\int_{E_{x}^{r}}{\beta^{2h(x,y)}\,d\nu(y)}\Big)\,d\nu(x)}}{\int_{\partial X\times\partial X}{\beta^{2h(x,y)}\,d\nu(x)\,d\nu(y)}},
$$
where $E_{x}^{r}=\{y\in\partial X\,\,:\,\,h(x,y)\leq r\}$. Let $F:\partial X\times\R^{+}\to [0,1]$ be the function defined by 
\begin{equation}
F(x,r):= \frac{\int_{E_{x}^{r}}{\beta^{2h(x,y)}\,d\nu(y)}}{\int_{\partial X}{\beta^{2h(x,y)}\,d\nu(y)}}.
\end{equation}
By the compactness of the boundary $\partial X$, to prove that for $1<\beta<\beta_c$ equation (\ref{small_beta}) holds it is enough to prove that 
$$
\lim_{r\to\infty}{F(x,r)}=1\quad\quad\text{for almost every $x\in\partial X$.}
$$ 
Analogously, if $\beta > \beta_c$ to prove that (\ref{big_beta}) holds it is enough to prove that 
$$
\lim_{r\to\infty}{F(x,r)}=0\quad\quad\text{for almost every $x\in\partial X$.}
$$ 
\noindent Since the function $h:\partial X\times\partial X\to \R^{+}$ only takes integer values (recall that $X$ is a graph) then 
\begin{equation}
\int_{E_{x}^{r}}{\beta^{2h(x,y)}\,d\nu(y)}=\sum_{k=0}^{r}{\beta^{2k}\cdot\nu\big(\{y\in\partial X\,:\,h(x,y)=k\}\big)},
\end{equation}
and 
\begin{equation}\label{eq_series}
\int_{\,\partial X\,\,}{\beta^{2h(x,y)}\,d\nu(y)}=\sum_{k=0}^{+\infty}{\beta^{2k}\cdot\nu\big(\{y\in\partial X\,:\,h(x,y)=k\}\big)}.
\end{equation}
Hence, $\lim_{r\to\infty}{F(x,r)}=1$ if the series in equation (\ref{eq_series}) converges for almost every $x\in\partial X$, and $\lim_{r\to\infty}{F(x,r)}=0$ if the series diverges. Note that by equation (\ref{da}) we have that 
$$
\nu\big(\{y\in\partial X\,:\,h(x,y)=k\}\big)\leq \nu\big(\{y\in\partial X\,:\,d_{a}(x,y)\leq Ca^{-k}\}\big).
$$
Using equation (\ref{hauss}) we know that there exists a constant $C$ such that for $\nu$--almost every $x$ in the boundary 
$$
\nu\big(\{y\in\partial X\,:\,d_{a}(x,y)\leq L\}\big)\leq KL^{e_{a}(X)},
$$
for every $L>0$. Therefore,
$$
\sum_{k=0}^{+\infty}{\beta^{2k}\cdot\nu\big(\{y\in\partial X\,:\,h(x,y)=k\}\big)}\leq\sum_{k=0}^{+\infty}{\beta^{2k}K(Ca^{-k})^{e_{a}(X)}}.
$$
Since 
\begin{equation}
\sum_{k=0}^{+\infty}{\beta^{2k}K(Ca^{-k})^{e_{a}(X)}}=KC^{e_{a}(X)}\cdot\sum_{k=0}^{+\infty}{\Bigg(\frac{\beta^{2}}{a^{e_{a}(X)}}\Bigg)^{k}}
\end{equation}
\noindent this series converges if and only if $\beta< a^{e_{a}(X)/2}=e^{e(X)/2}$. Analogously, if $\beta>e^{e(X)/2}$ then the series diverges and the traffic is asymptotically local.
\end{proof}

\begin{figure}[!Ht]
  \begin{center}
    \includegraphics[width=5cm]{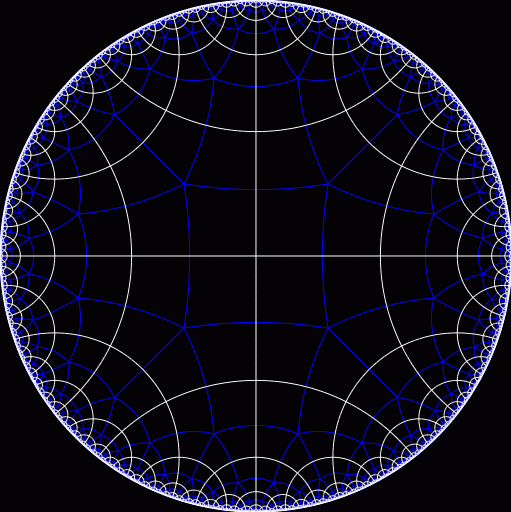}
    \caption{$H_{5,4}$ and $H_{4,5}$ tessellations of the Poincar\'e disk.}
  \end{center}
\end{figure}

\begin{obs}
For the case of the $(k+1)$--regular tree it is true that 
$$e_{a}(\Gamma)=\limsup_{n\to\infty}{\frac{\log_{a}(|\{\gamma\in\Gamma\,:\,|\gamma|\leq n\}|)}{n}}=\log_{a}(k).$$

\noindent Hence for the $(k+1)$--regular tree 
$$
1<\beta< \beta_c = e^{\frac{\log(k)}{2}}=\sqrt{k}.
$$ 
Note that in Section \ref{tree} we proved this result directly.
\end{obs}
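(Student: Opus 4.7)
The claim is really just a direct counting computation on the $(k{+}1)$-regular tree $\Gamma$ combined with the identity $\beta_c = e^{e(X)/2}$ already established in Theorem \ref{main}. So my plan is to compute $e_a(\Gamma)$ by hand and then substitute.

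First I would count spheres in $\Gamma$ centered at the root $x_0$. The root has degree $k+1$, while every non-root vertex has exactly $k$ neighbours strictly further from $x_0$ (its ``children'') and one neighbour strictly closer. By induction on the depth $\ell$, the sphere $S_\ell := \{x : d(x_0,x) = \ell\}$ therefore has cardinality $1$ for $\ell = 0$ and $(k+1)k^{\ell-1}$ for $\ell \ge 1$. Summing a geometric series,
\begin{equation*}
|B(x_0,n)| \;=\; |\{x \in \Gamma : d(x_0,x) \le n\}| \;=\; 1 + (k+1)\sum_{\ell=1}^{n} k^{\ell-1} \;=\; 1 + \frac{(k+1)(k^n-1)}{k-1}.
\end{equation*}

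Next I would feed this into the definition (\ref{growth}). Since $|B(x_0,n)|$ is bounded above and below by constant multiples of $k^n$ for $k \ge 2$ (and equals $2n+1$ when $k=1$, a trivial case one may exclude since hyperbolicity of the setup is of interest for $k\ge 2$), one has $\log_a |B(x_0,n)| = n\log_a k + O(1)$, so
\begin{equation*}
e_a(\Gamma) \;=\; \limsup_{n\to\infty} \frac{\log_a |B(x_0,n)|}{n} \;=\; \log_a k,
\end{equation*}
and in fact the $\limsup$ is a genuine limit. Converting to the natural logarithm by taking $a=e$ gives $e(\Gamma) = \log k$.

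Finally, plugging this into the formula $\beta_c = e^{e(X)/2}$ provided by Theorem \ref{main} yields $\beta_c = e^{\log(k)/2} = \sqrt{k}$, which matches the threshold obtained by the explicit series computation in Section \ref{tree}: there the ratio $P(n)$ had limit $1-\beta^2/k$ precisely in the regime $1<\beta<\sqrt{k}$ and tended to $0$ for $\beta \ge \sqrt{k}$. There is no real obstacle here; the only thing to be mildly careful about is that the expression $\log_a|B(x_0,n)|/n$ converges (so that writing $\limsup$ versus $\lim$ is harmless) and that $k \ge 2$ is assumed so that $\Gamma$ is actually a non-trivial hyperbolic tree.
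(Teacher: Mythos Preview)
Your proposal is correct and follows exactly the approach implicit in the paper's remark: compute the ball growth in the $(k+1)$-regular tree to obtain $e_a(\Gamma)=\log_a k$, then substitute into $\beta_c=e^{e(X)/2}$ from Theorem~\ref{main} to recover $\beta_c=\sqrt{k}$. The paper does not spell out the ball-counting step, so your argument simply supplies the routine details behind what is stated as an observation.
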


\begin{ej}
Let $X=S_{g}$ be the Cayley graph of surface group of genus $g\geq 2$ or more generally, let $X=H_{p,q}$ be the regular hyperbolic tessellations of the hyperbolic space (i.e. infinite planar graphs with uniform degree $q$ and $p$--gons as faces with $(p-2)(q-2)>4$). The rate of growth of these tessellations has been extensively studied and the numbers $e(X)$ are known as the roots of certain polynomials called Salem polynomials (see \cite{bart} for more details). In particular, it can be shown that for the $(5,4)$ tessellation 
$$
e(X)=\log\Big(\frac{3+\sqrt{5}}{2}\Big)
$$
and hence $\beta_c=\sqrt{\frac{3+\sqrt{5}}{2}}\approx 1.61803$.
\end{ej}

\end{document}